\newenvironment{abstracts}{%
  \ifx\maketitle\relax
    \ClassWarning{\@classname}{Abstract should precede
      \protect\maketitle\space in AMS document classes; reported}%
  \fi
  \global\setbox\abstractbox=\vtop \bgroup
    \normalfont\Small
    \list{}{\labelwidth\z@
      \leftmargin3pc \rightmargin\leftmargin
      \listparindent\normalparindent \itemindent\z@
      \parsep\z@ \@plus\p@
      
      \itemsep\medskipamount
    }%
}{%
  \endlist\egroup
  \ifx\@setabstract\relax \@setabstracta \fi
}
\newcommand{\abstractin}[1]{%
  \otherlanguage{#1}%
  \item[\hskip\labelsep\scshape\abstractname.]%
}
\DeclareMathAlphabet{\mathpzc}{OT1}{pzc}{m}{it}
\newtheorem{thm}{Theorem}[section]
\newtheorem*{prop*}{Proposition}
\newtheorem{cor}[thm]{Corollary}
\newtheorem*{cor*}{Corollary}
\newtheorem*{lem*}{Lemma}
\numberwithin{thm}{section}
\theoremstyle{definition}
\newtheorem{Def}[thm]{Definition}
\newtheorem{que}{Question}[section]
\numberwithin{equation}{section}
\def\scrF{{\mathcal F}}
\def\i{\mathrm{i}}
\def\G{\operatorname{G{}}}
\def\vol{\operatorname{vol}}
\def\Onder#1#2#3#4#5{#1 \setbox0=\hbox{$#1$}\setbox1=\hbox{$#2$}
       \dimen0=.5\wd0 \dimen1=\dimen0 \dimen2=\dp0 \dimen3=\dimen2
       \advance\dimen0 by .5\wd1 \advance\dimen0 by -#4
       \advance\dimen1 by -.5\wd1 \advance\dimen1 by -#4
       \advance\dimen2 by -#3 \advance\dimen2 by \ht1
       \advance\dimen2 by 0.3ex \advance\dimen3 by #5
        \kern-\dimen0\raisebox{-\dimen2}[0ex][\dimen3]{\box1}
       \kern\dimen1}
\newcommand{\F}{\mathcal{F}}
\renewcommand{\fg}{\mathfrak{g}}
\newcommand{\fk}{\mathfrak{k}}
\newcommand{\fp}{\mathfrak{p}}
\newcommand{\fa}{\mathfrak{a}}
\newcommand{\ba}{\backslash}
\newcommand{\op}{\operatorname}
\newcommand{\la}{\lambda}
\renewcommand{\epsilon}{\varepsilon}
\renewcommand{\G}{\Gamma}
\newcommand{\Ga}{\Gamma}
\newcommand{\cal}{\mathcal}
\newcommand{\be}{\begin{equation}}
\newcommand{\ee}{\end{equation}}
\newcommand{\ga}{\gamma}
\renewcommand{\c}{\mathbb C}
\renewcommand{\la}{\langle}
\newcommand{\ra}{\rangle}
\newcommand{\cM}{\mathcal M}
\begin{document}

\title[Bottom of the spectrum]{Infinite volume and atoms at the bottom of the spectrum}

\author{Sam Edwards, Mikolaj Fraczyk, Minju Lee*, Hee Oh}

\address{Department of Mathematical Sciences, Durham University, Lower Mountjoy, DH1 3LE Durham, United Kingdom}
\address{Mathematics department, University of Chicago, Chicago, IL 60637, USA}
\address{Mathematics department, University of Chicago, Chicago, IL 60637, USA}
\address{Mathematics department, Yale university, New Haven, CT 06520, USA}

\thanks{Oh was supported in part by NSF grant DMS-1900101}

\begin{abstracts}
\abstractin{english}
Let $G$ be a higher rank simple real algebraic group, or more generally, any semisimple real algebraic group with no rank one factors  and $X$ the associated Riemannian symmetric space.
For any Zariski dense discrete subgroup $\Ga<G$, 
we prove that $\op{Vol}(\Ga\ba X)=\infty$  if and only if  no positive Laplace
 eigenfunction  belongs to $L^2(\Ga\ba X)$, or equivalently, the bottom of the $L^2$-spectrum is not an atom of the spectral measure of the negative Laplacian. This contrasts with the rank one situation where the square-integrability of the base eigenfunction is determined by the size of the critical exponent relative to the volume entropy of $X$.
 \textbf{\\$\!$ \\ Volume infini et atomes au bas du spectre}
\abstractin{french}
Soit $G$ un groupe algébrique réel simple de rang supérieur, ou plus généralement un groupe algébrique réel semi-simple sans facteurs de rang un et $X$ l'espace symétrique riemannien associé. 
Pour tout sous-groupe discret dense de Zariski $\Gamma<G$, on prouve que $\mathrm{Vol}(\Gamma\backslash X)=\infty$ si et seulement si aucune fonction propre de Laplacien positive appartient à $L^2(\Gamma\backslash X)$, ou de manière équivalente, le bas du spectre $L^2$ n'est pas un atome de la mesure spectrale du Laplacien négatif.
Cela contraste avec la situation de rang un où l'intégrabilité au carré de la fonction propre de base est déterminée par la taille de l'exposant critique par rapport à l'entropie volumique de $X$.
\end{abstracts}
\maketitle





\keywords{}


\section{Introduction}
Let $\cM$ be a complete Riemannain manifold and let $\Delta$ denote the Laplace-Beltrami operator on $\cM$.
Define the real number $\lambda_0(\cM)\in [0, \infty)$ by
 \be\label{la33} \lambda_0(\cM) := \inf\left\lbrace \frac{\int_{\cM}\|\text{grad} \, f \|^2\,d\vol}{\int_{\cM}|f|^2\,d\vol}\,:\,f\in C^\infty_c(\cM)\right\rbrace,\ee
 where $C^\infty_c(\cM)$ denotes the space of all smooth functions with compact support.
This number $\lambda_0(\cM)$ is known as the bottom of the $L^2$-spectrum of the negative Laplacian $-\Delta$ and separates the $L^2$-spectrum and the positive spectrum \cite[p. 329]{Su} (Fig. 1). 
\begin{figure}[ht]  \includegraphics [height=2.5cm]{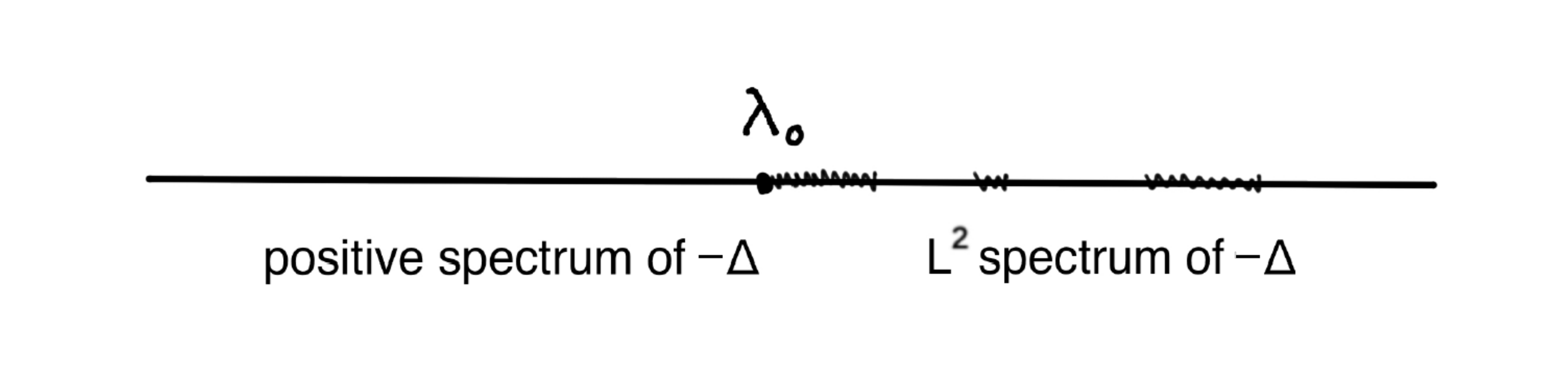}
\caption{$\lambda_0$ separates the $L^2$ and positive spectrum}\end{figure} 
More precisely, let $L^2(\cM)$
denote the space of all square-integrable functions  with respect to the inner product $\la f_1, f_2\ra=\int_{\cM} f_1 \overline{f_2} \,d\vol$.
 Let $W^1(\cM)\subset L^2(\cM)$ denote the closure of $C_c^{\infty}(\cM)$ with respect to the norm $$\|f\|_{W^1}=\left(\int_{\cM} f^2\,d\vol + \int_{\cM} \| \op{grad}f\|^2\,d\vol \right)^{1/2}.$$ There exists a unique self-adjoint operator on the space $W^1(\cM)$ extending the Laplacian $\Delta$ on $C^\infty_c(\cM)$,
 which we also denote by $\Delta$ (cf. \cite[Chapter 4.2]{Gr}).  The $L^2$-spectrum of $-\Delta$ is the set of all $\lambda\in \c$ such that $\Delta+\lambda $ does not have a bounded inverse
 $(\Delta+\lambda)^{-1}:L^2(\cM)\to W^1(\cM)$. 
Sullivan showed that the $L^2$-spectrum of $-\Delta$ contains
$\lambda_0(\cM)$ and is contained in the positive ray $[\lambda_0 (\cM),\infty),$
that is, $\lambda_0(\cM)$ is the bottom of the $L^2$-spectrum,
and moreover,  there are no positive eigenfunctions with eigenvalue strictly bigger than $\lambda_0(\cM)$ \cite[Theorem 2.1 and 2.2]{Su} (see Fig.\ 1).
We will call an eigenfunction with eigenvalue $\lambda_0(\cM) $ a {\it base eigenfunction}. Note that the absence of a base eigenfunction in $L^2(\cM)$ is the same as the absence of a positive eigenfunction in $L^2(\cM)$
\cite[Cor. 2.9]{Su}.

In this paper, we are concerned with locally symmetric spaces. Let $G$ be a connected semisimple real algebraic group and $(X, d)$ the associated Riemannian symmetric space. Let $\Ga<G$ be a discrete torsion-free subgroup and let $\cM=\Ga\ba X$ the corresponding locally symmetric manifold.

For a rank one locally symmetric manifold $\cM=\Ga\ba X$, the relation between $\lambda_0(\cM)$
and the critical exponent\footnote{the abscissa of convergence of the Poincare series $s\mapsto \sum_{\gamma\in \Gamma}e^{-s d(o, \gamma o)}$, $o\in X$.}  $\delta_\Ga$ is well-known:
if we denote by $D=D_X$ the volume entropy of $X$,
then \begin{equation*} \lambda_0(\cM) =\begin{cases} D^2/4 & \text{ if $\delta_\Ga\le D/2$}\\
\delta_\Ga (D-\delta_\Ga)
&\text{otherwise}\end{cases}\end{equation*}
(\cite{E1}-\cite{E3}, \cite{P1}-\cite{P3}, \cite{Su}, \cite{Cor}). We refer to
 (\cite{Leu}, \cite{AZ}, \cite{Co}, \cite{EO}, \cite{WW2}) for extensions
 of these results to higher ranks. We remark that when $G$ has Kazhdan’s property (T)  (cf. \cite[Theorem
7.4.2]{Zi}), we have
 $\op{Vol}(\cM)=\infty$
 if and only if $\lambda_0(\cM)>0$ (\cite{Cor}, \cite{Leu}).

\medskip 
The goal of this article is to study the square-integrability of a base eigenfunction of locally symmetric manifolds. The space of square-integrable base eigenfunctions is at most one dimensional and generated by a {\it positive} function when non-trivial \cite{Su}. Based on this positivity property and using their theory of conformal measures on the geometric boundary, Patterson and Sullivan showed that if $\cM$ is a geometrically finite real hyperbolic $(n+1)$-manifold,
then
$\cM$ has a square-integrable base eigenfunction if and only if the critical exponent $\delta_\Ga$ 
is strictly greater than $n/2$ (\cite{Pa}, \cite{Su2}, \cite[Theorem 2.21]{Su}). More generally, the formula for $\lambda_0(\cM)$ given above, together with \cite[Corollary 3.2]{Ha} (cf.\ also \cite{Li}) and \cite[Theorem 1.1]{WW}, implies  that any rank one geometrically finite manifold $\cM$  has a square-integrable base eigenfunction if and only if the critical exponent $\delta_\Ga$ is strictly greater than $ D_X/2$.

The main theorem of this paper is the following surprising higher rank phenomenon that contrasts with the rank one situation:

\begin{thm}\label{main}
    Let $G$ be a connected semisimple real algebraic group with no rank one factors.
    For any Zariski dense discrete torsion-free subgroup $\Ga<G$, we have
    $\op{Vol}(\Ga\ba X)=\infty$  if and only if
    $\Ga\ba X$ does not possess any square-integrable positive Laplace eigenfunction, that is, 
   $\lambda_0(\Ga\ba X)>0$ is not an atom for the spectral measure of $-\Delta$. 
\end{thm}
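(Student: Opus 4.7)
One direction is immediate: if $\op{Vol}(\Gamma\ba X)<\infty$, then the constant function lies in $L^2(\Gamma\ba X)$ and is a positive eigenfunction of $-\Delta$ with eigenvalue zero, so $\lambda_0=0$ is an atom of the spectral measure. The substance is the converse.

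Assume $\op{Vol}(\Gamma\ba X)=\infty$. Because $G$ has no rank one simple factors, each simple factor has real rank at least two and hence Kazhdan's property (T); so does $G$. By the Cornulier--Leuzinger criterion recalled in the introduction, $\lambda_0(\Gamma\ba X)>0$. It remains to rule out a square-integrable positive base eigenfunction, which by Sullivan's Corollary~2.9 is equivalent to the assertion that $\lambda_0$ is not an atom of the spectral measure.

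Suppose for contradiction that a positive $\phi\in L^2(\Gamma\ba X)$ with $-\Delta\phi=\lambda_0\phi$ exists, and lift it to a positive $\Gamma$-invariant eigenfunction $\tilde\phi$ on $X=G/K$. By the Furstenberg--Poisson representation of positive $\Delta$-eigenfunctions on $X$, there is a positive Borel measure $\nu$ on the Furstenberg boundary $B=G/P$ such that
\[
 \tilde\phi(gK)=\int_B e^{-\la \psi+\rho,\,\beta_\xi(g)\ra}\,d\nu(\xi),
\]
where $\beta_\xi\colon G\to\fa$ is the $\fa$-valued Busemann cocycle and $\psi\in\fa^*$ is the spectral parameter pinned down by $\lambda_0$. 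The $\Gamma$-invariance of $\tilde\phi$ forces $\nu$ to be a $(\Gamma,\psi+\rho)$-conformal Patterson--Sullivan measure supported on the limit set of $\Gamma$. Pairing $\nu$ with the corresponding measure at the opposite boundary component (obtained via the longest Weyl involution) and running the higher-rank Bowen--Margulis--Sullivan construction, one obtains a locally finite Radon measure $m_\phi$ on $\Gamma\ba G$ invariant under the Weyl chamber flow of $A$, satisfying the comparison
\[
 \int_{\Gamma\ba X}\phi^2\,d\vol \;\asymp\; m_\phi(\Gamma\ba G/M).
\]
In particular, $\phi\in L^2$ produces a \emph{finite} $A$-invariant Radon measure $m_\phi$ on $\Gamma\ba G$.

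The proof would then close by invoking the following rigidity statement, which is the main obstacle: for $G$ semisimple with no rank one factors and $\Gamma<G$ Zariski dense and discrete, a finite Bowen--Margulis--Sullivan-type measure on $\Gamma\ba G$ can exist only when $\Gamma$ is a lattice. This contradicts $\op{Vol}(\Gamma\ba X)=\infty$ and finishes the argument. Establishing this rigidity is the heart of the matter and should draw on the higher-rank measure-classification machinery following Ratner, Einsiedler--Katok--Lindenstrauss, and Benoist--Quint, together with the more recent classification of $A$- and horospherically invariant Radon measures on $\Gamma\ba G$ for Zariski dense discrete subgroups of arbitrary covolume; the hypothesis that no simple factor has rank one enters here crucially, excluding the rank-one-like degenerate horospherical alternatives.
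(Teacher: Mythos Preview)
Your outline follows the same architecture as the paper: represent the hypothetical $L^2$ base eigenfunction via a conformal measure on the Furstenberg boundary, compare its $L^2$ norm with a generalized Bowen--Margulis--Sullivan measure by a smearing inequality, and invoke the Fraczyk--Lee rigidity result (Theorem~\ref{fl}) for the contradiction. There is, however, one genuine gap.

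The step where you invoke a ``Furstenberg--Poisson representation'' to write $\tilde\phi(gK)=\int_B e^{-\la \psi+\rho,\,\beta_\xi(g)\ra}\,d\nu(\xi)$ for a \emph{single} $\psi\in\fa^*$ ``pinned down by $\lambda_0$'' is not justified in higher rank: a positive $\lambda_0$-harmonic function need not be a joint eigenfunction of the whole ring $\mathcal D(X)$, and the Laplace eigenvalue alone does not determine $\psi$. The integral representation of a positive Laplace eigenfunction in general spreads mass over many parameters, so your subsequent claim that $\Gamma$-invariance forces $\nu$ to be $(\Gamma,\psi+\rho)$-conformal for a fixed $\psi$ has no footing. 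The paper handles exactly this point in Theorem~\ref{bbb} (from \cite{EO}, combining Sullivan's positivity argument with \cite{La}): square-integrability of the base eigenfunction forces it, up to scalar, to be a positive joint eigenfunction, and only then does one get $\phi_0=E_\nu$ for a single $(\Gamma,\psi)$-conformal measure.

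A smaller imprecision: ``pairing $\nu$ with the corresponding measure at the opposite boundary component'' is vague---there is no a priori second $(\Gamma,\psi\circ\i)$-conformal measure to pair with, and the resulting measure need not be $A$-invariant. The paper instead uses the pair $(\nu,\nu)$; the smearing inequality (Theorem~\ref{smear}) gives $|\mathsf m_{\nu,\nu}|\ll\|E_\nu\|_2^2$ regardless of $A$-invariance, and a short semi-invariance argument (Corollary~\ref{fl2}) reduces to the case covered by Fraczyk--Lee. Also, only the one-sided inequality is available and needed, not your claimed $\asymp$.
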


In other words, when $\text{Vol}(\Ga\ba X)=\infty,$ no base eigenfunction is square-integrable  (see also Theorem \ref{main2} for a more general version). A special case of this theorem for Anosov subgroups of higher rank semisimple Lie groups was proved in \cite[Theorem 1.8]{EO}.
See Theorem \ref{main2} for a more general version.

Our proof of Theorem \ref{main} is based on the higher rank version of Patterson-Sullivan theory introduced by Quint \cite{Quint}, with a main new input being the recent theorem of  Fraczyk and Lee (Theorem 
\ref{fl}, \cite{FL}). Suppose that
$\text{Vol}(\Ga\ba X)=\infty$ and  a base eigenfunction
is square-integrable. Using Sullivan's work \cite{Su}, it was then shown by Edwards and Oh \cite{EO} 
that there exists a $\Ga$-conformal density $\{\nu_x: x\in X\}$
on the Furstenberg boundary of $G$ (see Definition \ref{conf_def}) such that
any such base eigenfunction is proportional to the function $E_\nu$
given by
\be\label{sm} E_\nu(x)=|\nu_x|\quad\text{for all $x\in X$}.\ee 
Moreover, the following higher rank version of the smearing theorem of Thurston and Sullivan (\cite{Su2}, \cite{Su3}) was also obtained by Edwards-Oh \cite{EO} (see Theorem \ref{smear}):
$$|{\mathsf m}_{\nu, \nu}|\ll \int_{\Ga\ba X} |E_\nu|^2 dx, $$
where ${\mathsf m}_{\nu, \nu}$ is a generalized Bowen-Margulis-Sullivan measure
on $\Ga\ba G$ corresponding to the pair $(\nu, \nu)$; see Definition \ref{measure}.
On the other hand,
the recent theorem of Fraczyk and Lee (Theorem \ref{fl}, \cite{FL}) which describes all discrete subgroups admitting finite BMS measures implies that $|{\mathsf m}_{\nu, \nu}|=\infty$, and consequently, $E_\nu \notin L^2(\Ga\ba X)$, yielding a contradiction. We remark that the integrand on the right hand side of \eqref{sm} can be replaced by an $O(1)$-neighborhood of the support of $\mathsf m_{\nu, \nu}$ and
Sullivan used the rank one version of this to deduce the finiteness of the BMS measure 
$\mathsf m_{\nu, \nu}$ attached to the (unique) Patterson-Sullivan measure $\nu$ from the the growth control of the base eigenfunction for $\Ga$ geometrically finite \cite{Su2}.

\medskip 

 We close the introduction by presenting two related questions on the $L^2$-spectrum.
When $\Ga<G$ is geometrically finite in a rank one Lie group and
 there is no positive square-integrable eigenfunction, there are no Laplace eigenfunctions in $L^2(\Ga\ba X)$ and the quasi-regular representation $L^2(\Ga\ba G)$ is tempered\footnote{This means that
 $L^2(\Ga\ba G)$ is weakly contained in $L^2(G)$, or equivalently, every matrix coefficient of $L^2(\Ga\ba G)$  is $L^{2+\epsilon}(G)$-integrable for any $\epsilon>0$.} 
 (\cite{P1}, \cite{Su2}, \cite{CI}, \cite{LP}). In view of this, we ask the following question: let $G$ be a semisimple real algebraic group with no rank one factors and
 $\Ga<G$ be a Zariski dense discrete subgroup.
\begin{que}\label{Q}
    When $\Gamma<G$ is not a lattice, 
can there exist any Laplace eigenfunction in $L^2(\Ga\ba X)$?
    \end{que}

\medskip 

\noindent{\bf Acknowledgements}
We would like to thank Peter Sarnak and David Fisher for their interests and useful comments. We thank the anonymous referee for helpful remarks.

\section{Positive eigenfuntions and conformal measures}
Let $G$ be a connected semisimple real algebraic group.
We fix, once and for all, a Cartan involution $\theta$ of the Lie algebra $\mathfrak{g}$ of $G$, and decompose $\fg$ as $\mathfrak g=\mathfrak k\oplus\mathfrak{p}$, where $\fk$ and $\fp$ are the $+ 1$ and $-1$ eigenspaces of $\theta$, respectively. We denote by $K$ the maximal compact subgroup of $G$ with Lie algebra $\fk$. We also choose a maximal abelian subalgebra $\fa$ of $\mathfrak p$.
We denote by $\la\cdot,\cdot\ra$ and $\| \cdot \|$ respectively
 the Weyl-group invariant inner product and norm on $\mathfrak a$ induced from the Killing form on $\frak g$.  We denote by $X=G/K$ the corresponding Riemannian symmetric space equipped
with the Riemannian metric $d$ induced by the Killing form on $\fg$.
The Riemannian volume form on $X$ is denoted by $d\op{vol}$ . We also use $dx$ to denote this volume form, as well as for the Haar measure on $G$.

Let $A:=\exp \mathfrak a$.
Choosing a closed positive Weyl chamber $\fa^+$ of $\fa$,  let $A^+=\exp \mathfrak a^+$. The centralizer of $A$ in $K$ is denoted by $M$, and we set 
$N$ to be the maximal  horospherical subgroup for $A$ so that
 $\log (N)$ is the sum of all positive root subspaces for our choice of $\fa^+$. 
We set $P=MAN$, which is a minimal parabolic subgroup of $G$. The quotient $$\F=G/P$$ is known as the Furstenberg boundary of $G$, and since $K$ acts transitively on $\F$ and $K\cap P=M$, we may identify $\cal F$ with $K/M$.

Let $\Sigma^+$ denote the set of all  positive roots for $(\fg, \fa^+)$. We also write $\Pi\subset \Sigma^+$ for the set of all simple roots.
For any $g\in G$, there exists a unique element $\mu(g)\in \fa^+$ such that $g\in K \exp \mu(g) K$. The map $\mu:G\to \fa^+$ is called the Cartan projection. Setting $o=[K]\in X$, we then have
$\|\mu(g)\|=d(go, o)$ for all $g\in G$. Throughout the paper we will identify functions on $X$ with
right $K$-invariant functions on $G$. For each $g\in G$, we define the following {\it visual} maps:
   \be\label{visual} g^+:=gP\in \cal F\quad\text{and}\quad g^-:=gw_0P\in \cal F,\ee 
where $w_0$ denotes the longest Weyl group element, i.e. the Weyl group element such that $\op{Ad}_{w_0}\mathfrak a^+= -\mathfrak a^+$.
 The unique open $G$-orbit $\F^{(2)}$ in $\F\times \F$ under the diagonal $G$-action is given by
$\F^{(2)}=G(e^+, e^-)=\{(g^+, g^-)\in \F\times \F: g\in G\}.$ Let $G=KAN$ be the Iwasawa decomposition, and define the Iwasawa cocycle $H:G\rightarrow \fa$ by the relation:
$$ g\in K\exp\big(H(g)\big)N.$$

The $\fa$-valued Busemann map is defined using the Iwasawa cocycle as follows: for all $g\in G$ and $[k]\in\scrF$ with $k\in K$, define
$$ \beta_{[k] }(g(o),h(o)):=H(g^{-1}k)-H(h^{-1}k)\in \fa \quad\text{
for all $g,h\in G$.}$$

\noindent{\bf Conformal measures.}
We denote by $\fa^*$ the space of all real-valued linear forms on $\fa$.
In the rest of this section, let $\Ga<G$ be a discrete subgroup.
The following notion of conformal densities was introduced by Quint \cite[Section 1.2]{Quint}, generalizing Patterson-Sullivan densities for rank one groups (\cite[Section 3]{Pa}, \cite[Section 1]{Su1}).
\begin{Def} \label{conf_def} Let $\psi\in\fa^*$.
\begin{enumerate}
    \item A  finite Borel
measure $\nu$ on $\scrF=K/M$ is said to be a $(\Ga,\psi)$-conformal measure (for the basepoint $o$) if for all $\ga\in \Ga$ and $\xi=[k]\in K/M$,
$$ \frac{d\ga_{\ast}\nu}{d\nu}(\xi)=e^{-\psi(\beta_\xi (\ga o,  o))},$$
where $\gamma_*\nu(Q)=\nu(\gamma^{-1}Q)$ for
any Borel subset $Q\subset \cal F$.
\item A collection $\{\nu_x: x\in X\} $ of finite Borel measures on $\F$
is called a $(\Ga, \psi)$-conformal density if, 
for all $x, y\in X$, $\xi\in \F$ and $\ga\in \Ga$,
\be \label{c0} \frac{d\nu_x}{d\nu_y}(\xi)= e^{-\psi (\beta_\xi (x,y))}\quad\text{and}\quad d\gamma_* \nu_x =d\nu_{\ga (x)}.\ee 
\end{enumerate}

\end{Def}

A $(\Gamma,\psi)$-conformal  measure $\nu$ defines a $(\Ga, \psi)$-conformal density $\{\nu_x: x\in X\}$ by the formula:
$$d\nu_x(\xi) =e^{-\psi(\beta_\xi (x, o))} d\nu(\xi) ,$$
and conversely any $(\Ga,\psi)$-conformal density $\{\nu_x\}$
is uniquely determined by its member $\nu_o$ by \eqref{c0}. By a $\Ga$-conformal measure on $\F$, we mean a $(\Ga, \psi)$-conformal measure for some $\psi\in \fa^*$.

\begin{Def} Let $\psi\in \fa^*$. Associated to a $(\Ga, \psi)$-conformal measure $\nu$ on $\cal F$, we define the following function $E_\nu$ on $G$: for $g\in G$,\be\label{ef} E_{\nu}(g):=|\nu_{g(o)}|=\int_{\scrF} e^{-\psi\big(H(g^{-1}k)\big)}\,d\nu([k]).\ee
We remark that this is same as the transformation introduced in \cite{Hel}.
Since $|\nu_{\ga(x)}|=|\nu_x|$ for all $\ga \in \Ga$ and $x\in X$,
 the left $\Ga$-invariance and right $K$-invariance
 of $E_\nu$ are clear. Hence we may consider $E_\nu$
 as a $K$-invariant function on $\Gamma\ba G$, or, equivalently, as a
 function on $\Gamma\ba X$.
\end{Def}
Let $\cal D=\cal D(X)$ denote the ring of all $G$-invariant differential operators on $X$.
For each $(\Ga, \psi)$-conformal measure $\nu$, $E_\nu$ is a joint eigenfunction of $\cal D$ and conversely,
any {\it positive} joint eigenfunction on $\Ga\ba X$ arises as $E_\nu$
for some $(\Ga, \psi)$-conformal measure $\nu$ \cite[Proposition 3.3]{EO}.

Let $\Delta$ denote the Laplace-Beltrami operator on $X$ or on $\Gamma\ba X$.
Since $\Delta$ is an elliptic differential operator, an eigenfunction is always smooth. 
We say a smooth function $f$ is $\lambda$-harmonic if $$-\Delta f=\lambda f.$$

 Define the real number $\lambda_0=\lambda_0(\Gamma\ba X)\in [0,\infty)$ as follows: \be\label{ll} \lambda_0:= \inf\left\lbrace \frac{\int_{\Gamma\ba X}\|\text{grad} \, f \|^2\,d\vol}{\int_{\Gamma\ba X}|f|^2\,d\vol}\,:\,f\in C^\infty_c(\Gamma\ba X),\; f\neq 0 \right\rbrace .\ee 
 
We call a $\lambda_0$-harmonic function on $\Ga\ba X$ a base eigenfunction. In general, a $\lambda$-harmonic function need not be a joint eigenfunction for the ring $\cal D(X)$.
However, a square-integrable $\lambda_0$-harmonic function turns out to be a {\it positive} joint eigenfunction, up to a constant multiple.
The following is obtained in \cite[Corollary 6.6, Theorem 6.5]{EO} using Sullivan's work \cite{Su} and \cite{La}.
\begin{thm}\cite{EO}\label{bbb}
If a base eigenfunction $\phi_0$ belongs to $L^2(\Ga\ba X)$, then there exists
$\psi\in \fa^*$ and a $(\Ga, \psi)$-conformal measure $\nu$ on $\F$ such that
$\phi_0$
is proportional to $E_\nu$.
\end{thm}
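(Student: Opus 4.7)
The plan is to prove the statement in three steps: reduce to a positive eigenfunction on $X$, produce an integral representation over $\F$, and identify the representing measure as a conformal density.

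First, I invoke Sullivan's theorem \cite{Su} (already recorded in the introduction): the subspace of $L^2(\Ga\ba X)$ consisting of $\lambda_0$-harmonic functions is at most one-dimensional and, when non-trivial, is spanned by a strictly positive function. Hence after rescaling we may assume $\phi_0>0$. Lifting $\phi_0$ to $X$ yields a $\Ga$-invariant, smooth, positive function $\tilde\phi_0$ on $X$ satisfying $-\Delta\tilde\phi_0=\lambda_0\tilde\phi_0$.

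The core of the argument is to write $\tilde\phi_0$ as an integral over the Furstenberg boundary $\F$. By the Karpelevich-type integral representation for positive Laplace eigenfunctions on the Riemannian symmetric space $X=G/K$, together with the Lax--Phillips spectral theory \cite{La} which provides the growth control that is available once $\lambda_0>0$, one produces a linear form $\psi\in\fa^*$ and a non-negative finite Borel measure $\nu$ on $\F$ such that
\[
\tilde\phi_0(gK)=\int_{\F}e^{-\psi(H(g^{-1}k))}\,d\nu([k]).
\]
The left $\Ga$-invariance of $\tilde\phi_0$ now dictates the transformation law of $\nu$ under $\Ga$. Using the cocycle identity for the Iwasawa projection $H$ together with the definition of the Busemann cocycle $\beta_\xi$, a direct change of variables in the integral above yields
\[
\frac{d\ga_*\nu}{d\nu}(\xi)=e^{-\psi(\beta_\xi(\ga o,o))},\qquad \ga\in\Ga,\;\xi\in\F,
\]
which is precisely the requirement for $\nu$ to be a $(\Ga,\psi)$-conformal measure in the sense of Definition \ref{conf_def}. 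Comparing with \eqref{ef} gives $\tilde\phi_0=E_\nu$, and hence $\phi_0\propto E_\nu$ on $\Ga\ba X$.

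The main obstacle is the integral representation step. In higher rank a positive $\Delta$-eigenfunction need not be a joint eigenfunction of the full ring $\cal D(X)$, so the classical Helgason--Karpelevich Poisson representation for joint eigenfunctions does not apply a priori; one needs a representation theorem that only requires positivity. The rigidity we exploit rests on positivity together with the $L^2$-hypothesis, which through Lax--Phillips guarantees that $\tilde\phi_0$ does not grow exponentially too fast in any direction. This extra control forces $\psi$ to be real-valued on $\fa$ and the boundary measure $\nu$ to be non-negative and finite. Once the representation is secured, the descent to the conformal transformation rule is a routine computation with the Iwasawa cocycle, and the identification $\phi_0\propto E_\nu$ follows.
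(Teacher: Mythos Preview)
Your three-step outline --- reduce to a positive function via Sullivan, obtain a boundary integral representation, then read off conformality from $\Gamma$-invariance --- matches the shape of the argument the paper imports from \cite{EO}, and Steps~1 and~3 are fine. The gap is in Step~2.

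First, a citation error with substantive consequences: \cite{La} in this paper is not Lax--Phillips (that is \cite{LP}); it is R.~G.~Laha, \emph{Nonnegative eigenfunctions of Laplace--Beltrami operators on symmetric spaces} (1968), which deals directly with integral representations of positive eigenfunctions. No ``Lax--Phillips growth control'' enters the argument, and your side remark ``once $\lambda_0>0$'' is also unwarranted (nothing excludes $\lambda_0=0$).

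Second, the genuine obstacle you correctly identify --- that in higher rank a positive $\Delta$-eigenfunction need not be a joint eigenfunction of $\mathcal D(X)$, so the Karpelevi\v{c}/Martin representation a priori mixes several real parameters $\psi$ on the relevant sphere in $\mathfrak a^*$ --- is not resolved by your proposed mechanism. Positivity by itself already forces the exponents in the boundary representation to be real, so ``growth control'' is doing nothing there; and an $L^2$ bound does not collapse a genuine superposition of distinct $\psi$'s into a single one. What the $L^2$ hypothesis actually buys, via Sullivan, is that the $\lambda_0$-eigenspace in $L^2(\Gamma\backslash X)$ is one-dimensional. Since every $D\in\mathcal D(X)$ commutes with $\Delta$, this one-dimensionality is what forces $\phi_0$ to be a joint eigenfunction of the full ring $\mathcal D(X)$; at that point the paper's own \cite[Proposition~3.3]{EO} (the representation of positive \emph{joint} eigenfunctions as $E_\nu$, which is where \cite{La} is used) yields a single $\psi$ and a single conformal measure $\nu$. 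In your write-up the $L^2$ hypothesis is used only to get positivity and then an ad hoc growth argument is invoked; the real role of $L^2$ is the one-dimensionality, and that is the missing idea.
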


Here the space $L^2(\Ga\ba X)$
consists of square-integrable functions with respect to
the inner product $\la f_1, f_2\ra=\int_{\Ga\ba X} f_1 \overline{f_2} \,d\vol$.

\section{Higher rank smearing theorem}
Let $G$ be a connected semisimple real algebraic group and $\Ga<G$ be a discrete subgroup.
We recall the definition of a generalized Bowen-Margulis-Sullivan measure, as was defined in \cite[Section 3]{ELO}.

Fix   a pair of linear forms $\psi_1, \psi_2\in \mathfrak a^*$. 
 Let $\nu_1$ and $\nu_2$ be respectively
 $(\Gamma,\psi_1)$ and $(\Gamma,\psi_2)$ conformal measures on $\F$.
Using the homeomorphism (called the Hopf parametrization)  $G/M\to \F^{(2)} \times \mathfrak a$
given by $gM \mapsto (g^+, g^-, b=\beta_{g^-}(o,go)) $, define the following locally finite Borel measure $\tilde {\mathsf m}_{\nu_1, \nu_2}$ on $G/M$ 
  as follows: for $g=(g^+, g^-, b)\in \F^{(2)}\times \mathfrak a$,
\begin{equation}\label{eq.BMS0}
d\tilde {\mathsf m}_{\nu_1, \nu_2} (g)=e^{\psi_1 (\beta_{g^+}(o, go))+\psi_2( \beta_{g^-} (o, go )) } \;  d\nu_{1} (g^+) d\nu_{2}(g^-) db,
\end{equation}
  where $db=d\ell (b) $ is the Lebesgue measure on $\mathfrak a$ induced from the inner product $\langle \cdot,\cdot\rangle$.
The measure $\tilde {\mathsf m}_{\nu_1, \nu_2}$ is left $\Gamma$-invariant and right $A$-semi-invariant:
for all $a\in A$,
\be\label{semi}
a_*\tilde {\mathsf m}_{\nu_1, \nu_2}=e^{(-\psi_1+\psi_2\circ \i)(\log a)}\,\tilde {\mathsf m}_{\nu_1, \nu_2}, \ee 
where $\i$ denotes the opposition involution\footnote{It is defined by 
$\i (u)= -\op{Ad}_{w_0} (u),$
where $w_0$ is the longest Weyl element.}  $\i:\mathfrak a \to \mathfrak a$ (cf.\ \cite[Lemma 3.6]{ELO}). The measure $\tilde {\mathsf m}_{\nu_1, \nu_2}$ 
gives rise to a left $\Ga$-invariant and right $M$-invariant measure on $G$ by integrating along the fibers of $G\to G/M$ with respect to the Haar measure on $M$.
By abuse of notation, we will also denote this measure by $\tilde {\mathsf m}_{\nu_1, \nu_2}$.
We denote by \be\label{measure} {\mathsf m}_{\nu_1, \nu_2}\ee 
 the measure on $\Gamma\ba G$ induced by $\tilde {\mathsf m}_{\nu_1, \nu_2}$, and call it the generalized BMS-measure associated to the pair $(\nu_1, \nu_2)$.

The following theorem was proved in \cite[Theorem 7.4]{EO}, extending
the smearing argument due to Sullivan and Thurston (\cite[Proposition 5]{Su2}, \cite[Proof of Theorem 4.1]{CI}) to the higher rank setting.

\begin{thm}[Edwards-Oh, \cite{EO}]
\label{smear} Let $\psi_1, \psi_2\in \fa^*$.
There exists a constant $c=c(\psi_1, \psi_2)>0$ such that
for any pair $(\nu_1, {\nu}_2)$ of $(\Ga,\psi_1)$ and $(\Ga,
\psi_2)$-conformal measures on $\F$ respectively,
$$|{\mathsf m}_{\nu_1,{\nu}_2}|\le 
c \int_{\text{$1$-neighborhood of $\op{supp}\mathsf m_{\nu_1, \nu_2}$}} E_{\nu_1}(x) E_{{\nu}_2}(x) \, dx  .$$
\end{thm}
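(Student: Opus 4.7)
The strategy is to establish a pointwise lower bound on $E_{\nu_1}(x)E_{\nu_2}(x)$ that matches the Radon-Nikodym density of $\tilde{\mathsf m}_{\nu_1,\nu_2}$ with respect to Haar measure, by expressing both through the Busemann cocycle against $\nu_1,\nu_2$. The crucial observation is that the singular factors involving $\nu_i$ appear identically in both bounds and cancel exactly.

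Using \eqref{ef} and Fubini,
\[
E_{\nu_1}(x)E_{\nu_2}(x)=\int_{\scrF\times\scrF}e^{-\psi_1(\beta_\xi(xo,o))-\psi_2(\beta_\eta(xo,o))}\,d\nu_1(\xi)\,d\nu_2(\eta),
\]
which via $\beta_\xi(xo,o)=-\beta_\xi(o,xo)$ is exactly the BMS integrand from \eqref{eq.BMS0}. Fix a small open neighborhood $B\subset G$ of the identity of diameter less than $1$. For $g_0\in\op{supp}\tilde{\mathsf m}_{\nu_1,\nu_2}\subset G$, let $N_1(g_0)\times N_2(g_0)\times I(g_0)\subset\scrF^{(2)}\times\fa$ denote a box containing the image of $g_0B$ under the Hopf parametrization. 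By uniform continuity of the Busemann cocycle on compact subsets of $\scrF^{(2)}$, there is a constant $c_1=c_1(\psi_1,\psi_2,B)>0$ such that for every $x\in g_0B$ and $(\xi,\eta)\in N_1(g_0)\times N_2(g_0)$,
\[
e^{-\psi_1(\beta_\xi(xo,o))-\psi_2(\beta_\eta(xo,o))}\;\ge\;c_1\,e^{\psi_1(\beta_{g_0^+}(o,g_0o))+\psi_2(\beta_{g_0^-}(o,g_0o))}.
\]
Restricting the inner double integral to $N_1(g_0)\times N_2(g_0)$ and integrating over $x\in g_0B$ then yields
\[
\int_{g_0 B}E_{\nu_1}E_{\nu_2}\,dx\;\ge\;c_1\,|B|\,\nu_1(N_1(g_0))\,\nu_2(N_2(g_0))\,e^{\psi_1(\beta_{g_0^+}(o,g_0o))+\psi_2(\beta_{g_0^-}(o,g_0o))},
\]
while \eqref{eq.BMS0} together with the same continuity argument gives a matching upper bound $\tilde{\mathsf m}_{\nu_1,\nu_2}(g_0B)\le C_1\,|I(g_0)|\,\nu_1(N_1(g_0))\,\nu_2(N_2(g_0))$ times the same exponential. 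Dividing cancels the $\nu_i(N_i(g_0))$ factors and leaves $\int_{g_0B}E_{\nu_1}E_{\nu_2}\,dx\ge c_2\,\tilde{\mathsf m}_{\nu_1,\nu_2}(g_0B)$ for some $c_2=c_2(\psi_1,\psi_2)>0$, provided the $\fa$-extent $|I(g_0)|$ is bounded uniformly in $g_0$.

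The conclusion then follows by covering $\op{supp}\tilde{\mathsf m}_{\nu_1,\nu_2}$ inside a fundamental domain for $\Gamma$ by countably many translates $g_iB$ of bounded multiplicity (each automatically contained in the lift of the $1$-neighborhood of $\op{supp}\mathsf m_{\nu_1,\nu_2}$ since $B$ has diameter less than $1$), then summing the inequalities and descending to $\Gamma\ba G$ to obtain the stated bound with $c=c(\psi_1,\psi_2)$. The main obstacle is the uniform $\fa$-control of $|I(g_0)|$: the Hopf parametrization is not $G$-equivariant, so left translation by arbitrary $g_0$ can distort the box both in $\scrF^{(2)}$ and a priori in $\fa$. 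This is handled by choosing $B$ adapted to the Iwasawa decomposition so that the $\fa$-projection in Hopf coordinates is controlled solely by the intrinsic $A$-size of $B$, while any $\scrF$-distortion is absorbed into the canceling $\nu_i(N_i(g_0))$ factors. A secondary technical point is ensuring that the cover has bounded multiplicity and that the constants $c_1,C_1$ are uniform in $g_0$ over the support of the BMS measure.
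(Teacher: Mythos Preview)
The paper does not itself prove this theorem; it quotes \cite[Theorem 7.4]{EO} and identifies the argument there as an extension of the Sullivan--Thurston smearing argument. Your overall strategy is exactly that smearing argument, and the final covering step is sound.

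There is, however, a genuine gap in your central pointwise estimate. You assert that for all $x\in g_0B$ and $(\xi,\eta)\in N_1(g_0)\times N_2(g_0)$,
\[
e^{\psi_1(\beta_\xi(o,xo))+\psi_2(\beta_\eta(o,xo))}\ \ge\ c_1\,e^{\psi_1(\beta_{g_0^+}(o,g_0o))+\psi_2(\beta_{g_0^-}(o,g_0o))}
\]
with $c_1$ independent of $g_0$. This fails. Writing $\xi=g_0\xi_0$ with $\xi_0$ in the fixed neighborhood $\{b^+:b\in B\}$ of $e^+$, equivariance gives $\beta_\xi(o,g_0o)=\beta_{\xi_0}(g_0^{-1}o,o)$, and the map $\xi_0\mapsto\beta_{\xi_0}(y,o)$ has no modulus of continuity that is uniform in $y\in X$. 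Hence the exponential cannot be replaced by its value at the single point $(g_0^+,g_0^-)$ with a $g_0$-independent multiplicative error; ``uniform continuity of the Busemann cocycle on compact subsets of $\scrF^{(2)}$'' does not give this, because the basepoint $xo$ is moving with $g_0$.

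The repair is to keep the exponential inside the $\nu_1\times\nu_2$-integral rather than factoring it out. Using only that $\|\beta_\xi(p,q)\|\ll d(p,q)$ uniformly in $\xi$, one obtains for every $x\in g_0B$
\[
E_{\nu_1}(x)E_{\nu_2}(x)\ \ge\ c\int_{N_1(g_0)\times N_2(g_0)} e^{\psi_1(\beta_\xi(o,g_0o))+\psi_2(\beta_\eta(o,g_0o))}\,d\nu_1(\xi)\,d\nu_2(\eta),
\]
and the \emph{same} integral (times a uniform constant) bounds $\tilde{\mathsf m}_{\nu_1,\nu_2}(g_0B)$ from above: for each fixed $(\xi,\eta)$ the $\fa$-fiber of the Hopf image of $g_0B$ has length at most a constant times $\op{diam}(Bo)$, again by the Lipschitz bound on $\beta$. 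This last observation also disposes of your worry about $|I(g_0)|$ without any special choice of $B$. The unknown integral then cancels between the two sides, and the rest of your argument goes through.
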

Although \cite[Theorem 7.4]{EO} was stated so that $c$ depends on $\nu_1, \nu_2$,
the formula for $c$ given in its proof shows that $c$  depends only on the associated linear forms $\psi_1, \psi_2$. 
 
 An immediate corollary is as follows:
\begin{cor}
Let $\nu$ be a $\Ga$-conformal measure on $\F$.
If $|{\mathsf m}_{\nu, \nu}|=\infty$, then 
$$E_{\nu}\notin L^2(\Ga\ba X).$$
\end{cor}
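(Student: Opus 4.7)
The plan is to apply Theorem \ref{smear} directly, with $\nu_1=\nu_2=\nu$. First I would specialize the smearing inequality to the single-measure case: setting the two conformal measures equal gives a constant $c=c(\psi,\psi)>0$, depending only on the linear form $\psi$ associated to $\nu$, such that
$$|{\mathsf m}_{\nu,\nu}| \;\le\; c \int_{\text{$1$-neighborhood of $\op{supp}\mathsf m_{\nu,\nu}$}} E_{\nu}(x)^2 \, dx.$$

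Next, I would bound the right-hand side in terms of the full $L^2$-norm on $\Ga\ba X$. Since $E_\nu$ is nonnegative, enlarging the domain of integration from the $1$-neighborhood of $\op{supp}\mathsf m_{\nu,\nu}$ to all of $\Ga\ba X$ only increases the integral, so
$$|{\mathsf m}_{\nu,\nu}| \;\le\; c \int_{\Ga\ba X} E_\nu(x)^2 \, dx \;=\; c\,\|E_\nu\|_{L^2(\Ga\ba X)}^2.$$

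The contrapositive now yields the corollary: if $E_\nu\in L^2(\Ga\ba X)$, then the right-hand side is finite, forcing $|{\mathsf m}_{\nu,\nu}|<\infty$. Equivalently, the hypothesis $|{\mathsf m}_{\nu,\nu}|=\infty$ implies $E_\nu\notin L^2(\Ga\ba X)$. The main (and essentially only) nontrivial step is the smearing inequality itself, which has already been established as Theorem \ref{smear}; everything else is a one-line enlargement of the domain of integration followed by taking the contrapositive, so there is no serious obstacle to carrying this out.
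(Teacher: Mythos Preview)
Your proposal is correct and matches the paper's approach exactly: the paper presents this as an ``immediate corollary'' of Theorem \ref{smear} without further argument, and your specialization $\nu_1=\nu_2=\nu$, enlargement of the integration domain, and contrapositive are precisely the one-line justification the paper leaves implicit.
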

\section{Proof of Main theorem}
As in Theorem \ref{main}, let 
 $G$ be a connected semisimple real algebraic group with no rank one factors
    and $\G<G$ be a Zariski dense discrete torsion-free subgroup.
We recall the following recent theorem:
\begin{thm}[Fraczyk-Lee, \cite{FL}] \label{fl} Suppose that $\op{Vol}(\Ga\ba X)=\infty$. Then for any pair $(\nu_1, \nu_2)$ of
$(\Ga, \psi)$ and $(\Ga, \psi\circ \i)$-conformal measures for some $\psi\in \fa^*$,
$${\mathsf m}_{\nu_1, \nu_2}(\Ga\ba G)=\infty.$$
\end{thm}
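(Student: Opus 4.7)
The plan is to prove the contrapositive: assuming $|\mathsf m_{\nu_1,\nu_2}|<\infty$, deduce that $\Ga$ is a lattice in $G$, which contradicts $\op{Vol}(\Ga\ba X)=\infty$. The structural observation that unlocks the argument is that the specific pairing $(\psi_1,\psi_2)=(\psi,\psi\circ\i)$ makes $\mathsf m_{\nu_1,\nu_2}$ genuinely \emph{$A$-invariant}, not merely semi-invariant: in \eqref{semi} the exponent becomes $(-\psi+\psi\circ\i\circ\i)(\log a)=0$ since $\i^2=\op{id}$. Normalizing and decomposing into $A$-ergodic components, the task reduces to showing that no $A$-invariant, $A$-ergodic probability measure on $\Ga\ba G$ can arise from such a conformal pair when $\Ga$ is Zariski dense but not a lattice.

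Let $\mu$ be a single $A$-ergodic component. In the Hopf parametrization $G/M\simeq\F^{(2)}\times\fa$, the lift of $\mu$ factors as conditional measures on $\F^{(2)}$ built from $\nu_1\otimes\nu_2$, times Lebesgue on $\fa$, so the $A$-flow preserves an explicit product structure. Zariski density of $\Ga$, combined with Benoist's limit cone theorem, ensures that $\La_\Ga\subset\F$ is spread out enough for the Busemann cocycles $\beta_\xi(\cdot,\cdot)$ to be non-degenerate along multiple linear directions in $\fa^*$. Together with the conformality relations \eqref{c0}, this should yield strictly positive entropy of the $A$-action along several independent directions, exploiting that $\dim\fa\geq 2$ because $G$ has no rank one factors.

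The core of the proof is then to upgrade $\mu$ from an $A$-invariant measure with rich positive entropy to a homogeneous measure, via higher rank measure rigidity. The most plausible route is the high-entropy method of Einsiedler-Katok-Lindenstrauss, or more recent measure classification results of Lindenstrauss-Mohammadi type for diagonalizable actions: positive entropy in enough independent directions, combined with the conformal product structure, forces $\mu$ to be invariant under a non-trivial connected unipotent subgroup. Ratner's theorem then pins $\mu$ to a homogeneous measure on a closed orbit of a connected subgroup $L\leq G$, and Zariski density of $\Ga$ rules out $L\neq G$, so $\mu$ is the $G$-invariant probability measure on $\Ga\ba G$; thus $\Ga$ is a lattice.

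The main obstacle is precisely this measure rigidity step: $A$-invariance alone is far too weak, and the entire leverage must come from combining positive entropy with the specific conformal product structure imposed by $(\nu_1,\nu_2)$ together with Zariski density of $\Ga$. This is exactly why the statement fails in rank one, where non-lattice geometrically finite groups routinely support finite BMS measures: the required toolkit is genuinely higher rank. A secondary technical difficulty is ensuring that positive entropy actually holds along \emph{enough} directions in $\fa$ to trigger the rigidity machinery; this requires a delicate analysis of the non-degeneracy of the Busemann cocycle on $\La_\Ga\times\La_\Ga$. The novelty of Fraczyk-Lee presumably lies in showing that these ingredients can be packaged into an applicable rigidity input, even though the support of $\mathsf m_{\nu_1,\nu_2}$ may be extremely thin in $\Ga\ba G$.
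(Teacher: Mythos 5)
First, a point of order: the paper does not prove Theorem \ref{fl} at all --- it is imported as a black box from the external preprint \cite{FL} --- so there is no internal proof to compare your attempt against. That said, your opening observation is correct and is indeed the right starting point: since the opposition involution satisfies $\i^2=\id$, the choice $(\psi_1,\psi_2)=(\psi,\psi\circ\i)$ kills the exponent in \eqref{semi}, so $\mathsf m_{\nu_1,\nu_2}$ is genuinely $A$-invariant rather than merely semi-invariant; this is exactly the normalization singled out in Corollary \ref{fl2}. Framing the problem as ``no finite $A$-invariant measure of this conformal product type exists unless $\Ga$ is a lattice'' is also the correct framing.

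Beyond that, the proposal is a research program rather than a proof, and its two load-bearing steps are genuine gaps. (i) Positive entropy of the directional flows with respect to the BMS measure is asserted from ``non-degeneracy of the Busemann cocycle,'' but that is not an argument; one needs an actual entropy computation (the higher rank analogue of the Ledrappier/Otal--Peign\'e results), and one must show positivity for \emph{enough} coarse Lyapunov directions, which you yourself flag as unresolved. (ii) More seriously, the measure rigidity theorems you invoke (Einsiedler--Katok--Lindenstrauss high entropy method, Lindenstrauss--Mohammadi) are established for finite-volume quotients $\Ga\ba G$ with $\Ga$ a lattice; here $\op{Vol}(\Ga\ba X)=\infty$ by hypothesis, $\Ga\ba G$ is an infinite-volume homogeneous space, and the support of $\mathsf m_{\nu_1,\nu_2}$ can be extremely thin, so none of this machinery applies off the shelf --- re-proving it in this setting would essentially \emph{be} the theorem, not a step in its proof. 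Even granting unipotent invariance and Ratner (which does hold for general discrete $\Ga$), excluding proper closed orbits requires more than ``Zariski density rules out $L\neq G$.'' For what it is worth, the route actually taken in \cite{FL} is different: finiteness of the BMS measure is used to show that $\Ga$ is a confined subgroup (every conjugate meets a fixed compact subset of $G\setminus\{e\}$), and the conclusion then follows from the Chabauty-limit and stationary-measure classification machinery in the style of Fraczyk--Gelander and Nevo--Stuck--Zimmer, which is where the higher rank hypothesis and Zariski density do their work. Your diagnosis that the rank one failure mode (finite BMS measures for geometrically finite non-lattices) is the obstruction to be overcome is accurate, but the proposal does not supply a working mechanism that overcomes it.
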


\begin{cor}\label{fl2}
If $\op{Vol}(\Ga\ba X)=\infty$, then for any pair $(\nu_1, \nu_2)$ of
$\Ga$-conformal measures,
${\mathsf m}_{\nu_1, \nu_2}(\Ga\ba G)=\infty$.
\end{cor}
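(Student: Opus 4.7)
The plan is to bootstrap Theorem \ref{fl} to arbitrary pairs of $\Gamma$-conformal measures by exploiting the right-$A$-semi-invariance of $\mathsf m_{\nu_1,\nu_2}$ recorded in \eqref{semi}. Let $\nu_1$ be $(\Gamma,\psi_1)$-conformal and $\nu_2$ be $(\Gamma,\psi_2)$-conformal, with $\psi_1,\psi_2\in\fa^*$. I would split the proof into two cases according to whether $\psi_2=\psi_1\circ\i$ or not.

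If $\psi_2=\psi_1\circ\i$, then setting $\psi:=\psi_1$ displays $(\nu_1,\nu_2)$ as a pair of $(\Gamma,\psi)$- and $(\Gamma,\psi\circ\i)$-conformal measures, so Theorem \ref{fl} applies verbatim and yields $|\mathsf m_{\nu_1,\nu_2}|=\infty$. If instead $\psi_2\neq\psi_1\circ\i$, the linear form $\chi:=-\psi_1+\psi_2\circ\i\in\fa^*$ is nonzero, so I pick $a_0\in A$ with $\chi(\log a_0)\neq 0$. Because left $\Gamma$- and right $A$-actions commute, the semi-invariance \eqref{semi} descends to $\Gamma\ba G$; and since the right $a_0$-action is a bijection of $\Gamma\ba G$ that preserves total mass, we obtain
$$|\mathsf m_{\nu_1,\nu_2}|=|(a_0)_*\mathsf m_{\nu_1,\nu_2}|=e^{\chi(\log a_0)}\,|\mathsf m_{\nu_1,\nu_2}|,$$
forcing $|\mathsf m_{\nu_1,\nu_2}|\in\{0,\infty\}$. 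Notably, this step does not even require the hypothesis $\op{Vol}(\Gamma\ba X)=\infty$.

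To exclude the value $0$ in the second case, I would invoke the Zariski density of $\Gamma$: the supports of $\nu_1$ and $\nu_2$ are nonempty closed $\Gamma$-invariant subsets of $\F$ (by $\Gamma$-quasi-invariance, which is immediate from the conformality relation), hence each contains the limit set $\Lambda_\Gamma$, which is Zariski dense in $\F$ by Benoist's theorem. Therefore $\Lambda_\Gamma\times\Lambda_\Gamma$ is Zariski dense in $\F\times\F$, and its intersection with the Zariski-open subset $\F^{(2)}$ is a nonempty Euclidean-open subset of $\supp(\nu_1\otimes\nu_2)$. Consequently $(\nu_1\otimes\nu_2)(\F^{(2)})>0$, and the defining formula \eqref{eq.BMS0}, together with the positivity of the exponential weight and of the Lebesgue measure $db$, shows that $\mathsf m_{\nu_1,\nu_2}$ is nonzero on $\Gamma\ba G$. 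The main potential obstacle is precisely this non-vanishing step; the semi-invariance manipulation is essentially a one-line calculation, and in the first case the reduction to Theorem \ref{fl} is automatic once the case split is made.
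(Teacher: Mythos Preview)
Your argument is correct and follows essentially the same route as the paper: both exploit the $A$-semi-invariance \eqref{semi} to reduce to the case $\psi_2=\psi_1\circ\i$ covered by Theorem \ref{fl}. The paper argues by contradiction (assume $|\mathsf m_{\nu_1,\nu_2}|<\infty$, force $\psi_2=\psi_1\circ\i$, then contradict Theorem \ref{fl}) and leaves the non-vanishing of $\mathsf m_{\nu_1,\nu_2}$ implicit, whereas you make it explicit via the Zariski density of the limit set.
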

\begin{proof}
For $k=1,2$, let $\nu_k$ be a $(\Ga, \psi_k)$-conformal measure with $\psi_k\in \fa^*$. Suppose $|{\mathsf m}_{\nu_1, \nu_2}|<\infty$.
Since $a_* {\mathsf m}_{\nu_1, \nu_2}=e^{\psi_1 (\log a) -\psi_2(\i \log a)} {\mathsf m}_{\nu_1, \nu_2}$ for all $a\in A$ by \eqref{semi},
it follows that 
$$|{\mathsf m}_{\nu_1, \nu_2}| = e^{\psi_1 (\log a) -\psi_2(\i \log a)} |{\mathsf m}_{\nu_1, \nu_2}|.$$
Since  $|{\mathsf m}_{\nu_1, \nu_2}|<\infty$, we must have $$\psi_2 =\psi_1\circ \i .$$ Therefore the claim follows from Theorem \ref{fl}.
    \end{proof}

\noindent{\bf Proof of Theorem \ref{main}}
Suppose that $\op{Vol}(\Ga\ba X)=\infty$ and $\phi_0$ is
a base eigenfunction in $L^2(\Ga\ba X)$.
By Proposition \ref{bbb}, we may assume that $\phi_0=E_\nu$
 for some $\Ga$-conformal measure $\nu$ on $\F$.
Now by Theorem \ref{smear} and Corollary \ref{fl2},
$$\infty=|{\mathsf m}_{\nu, \nu}|\ll \|E_{\nu}\|_2^2 .$$
This is a contradiction.     

   Indeed, using a more precise version of the main theorem of \cite{FL} in replacement of Theorem \ref{fl}, we obtain the following without the hypothesis on no rank one factors.
\begin{thm}\label{main2}
    Let $G$ be a connected semisimple real algebraic group and $\Ga<G$ be a Zariski dense
    discrete subgroup.
    If $\Gamma\ba X$ admits a square-integrable base eigenfunction, then $G=G_1G_2$,
    $\Ga$ is commensurable with $\Gamma_1 \Gamma_2 $
    where $G_1$ $($resp. $G_2)$ is an almost direct product of rank one $($resp. higher rank$)$ factors of $G$, $\Ga_1<G_1$ is a discrete subgroup and $\Ga_2<G_2$ is a lattice.
\end{thm}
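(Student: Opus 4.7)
\noindent\textbf{Proof proposal for Theorem \ref{main2}.} The plan is to run the proof of Theorem \ref{main} almost verbatim, replacing the appeal to Theorem \ref{fl} with a sharper structural form of the same result from \cite{FL}. First, suppose that $\Ga\ba X$ carries a square-integrable base eigenfunction $\phi_0$. By Theorem \ref{bbb} there exist $\psi\in\fa^*$ and a $(\Ga,\psi)$-conformal measure $\nu$ on $\F$ such that $\phi_0$ is proportional to $E_\nu$. Applying Theorem \ref{smear} to the pair $(\nu,\nu)$ yields
$$|\mathsf m_{\nu,\nu}|\;\le\; c\int_{\Ga\ba X}E_\nu(x)^2\,dx\;=\;c\,\|\phi_0\|_2^{\,2}\;<\;\infty.$$
As in the proof of Corollary \ref{fl2}, the right $A$-semi-invariance \eqref{semi} combined with the finiteness of $|\mathsf m_{\nu,\nu}|$ forces $\psi=\psi\circ\i$. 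Thus we produce a finite generalized BMS measure for a conformal pair whose linear forms are related by the opposition involution, which is exactly the input required by \cite{FL}.

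The heart of the matter is the second step: feed this finite BMS measure into the refined version of the Fraczyk--Lee theorem. Theorem \ref{fl}, as stated in this paper, gives only the dichotomy that, when $G$ has no rank one factors, infinite covolume forces every generalized BMS measure to be infinite. The underlying result in \cite{FL} is however a genuine structure theorem: for an arbitrary Zariski dense discrete $\Ga<G$, the existence of a finite generalized BMS measure associated with a conformal pair $(\nu_1,\nu_2)$ whose linear forms satisfy $\psi_2=\psi_1\circ\i$ implies that $G$ decomposes as an almost direct product $G_1G_2$, where $G_1$ is the product of the rank one simple factors of $G$ and $G_2$ is the product of the higher rank simple factors, and that $\Ga$ is commensurable with a product $\Ga_1\Ga_2$ in which $\Ga_1<G_1$ is an arbitrary discrete subgroup and $\Ga_2<G_2$ is a lattice. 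Applying this statement to $\mathsf m_{\nu,\nu}$ gives the desired conclusion of Theorem \ref{main2}.

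The main obstacle is therefore the sharp Fraczyk--Lee classification itself, not anything new about eigenfunctions or conformal densities. Upgrading Theorem \ref{fl} from a binary volume/BMS dichotomy to a rigidity statement that identifies the higher rank part of $\Ga$ as a lattice (while placing no constraint on the rank one factors) is what allows the hypothesis ``no rank one factors'' to be dropped, and it is exactly the gap separating Theorem \ref{main2} from Theorem \ref{main}. I would import this refined classification from \cite{FL} rather than attempt to reprove it, since it rests on a delicate higher rank measure-rigidity analysis that has no analogue on the rank one side and whose reproduction would go far beyond the scope of the present argument.
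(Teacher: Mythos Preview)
Your proposal is correct and matches the paper's own argument essentially verbatim: the paper's proof of Theorem \ref{main2} consists of the single remark that one reruns the proof of Theorem \ref{main} with the refined structural version of the Fraczyk--Lee theorem from \cite{FL} in place of Theorem \ref{fl}. Your write-up spells out the steps (Theorem \ref{bbb}, Theorem \ref{smear}, the $A$-semi-invariance reduction to $\psi=\psi\circ\i$, then the sharp \cite{FL} classification) exactly as intended.
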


\end{document}